\documentclass{article}
\usepackage{amsmath}
\usepackage{amsthm}
\usepackage{mathtext}
\usepackage{amssymb}
\usepackage{tocenter}


\ToCenter{450pt}{650pt}

\newtheorem{thm}{Theorem}
\newtheorem*{thm*}{Theorem}
\newtheorem{cor}{Corollary}
\newtheorem{lem}{Lemma}
\newtheorem*{lem*}{Lemma}
\newtheorem{prop}{Proposition}
\newtheorem*{con*}{Conjecture}
\newtheorem*{prob*}{Problem}
\theoremstyle{definition}
\newtheorem{defn}{Definition}
\newtheorem*{ex*}{Example}
\newtheorem{cons}{Construction}
\newtheorem*{cons*}{Construction}
\newtheorem{rem}{Remark}
\theoremstyle{remark}
\newtheorem*{not*}{Notation}

\begin{document}

\title{Geometric realization of $\gamma$-vectors of 2-truncated cubes.}%
\author{V.~D.~Volodin\thanks{This work is supported by the Russian Government project 11.G34.31.0053.}}%
\date{}

\maketitle
\begin{abstract}
This paper continues investigation of the class of flag simple polytopes called 2-truncated cubes. It is an extended version of the short note \cite{V3}. A 2-truncated cube is a polytope obtained from a cube by sequence of truncations of codimension 2 faces. Constructed uniquely defined function which maps any 2-truncated cube to a flag simplicial complex with $f$-vector equal to $\gamma$-vector of the polytope. As a corollary we obtain that $\gamma$-vectors of 2-truncated cubes satisfy Frankl-Furedi-Kalai inequalities. 
\end{abstract}

\section{Introduction}

E.Nevo and T.K.Petersen (see \cite{NP}) studied $\gamma$-vectors of generalized associahedra and proved that $\gamma$-vectors of Stasheff polytopes and Bott-Taubes polytopes can be realized as $f$-vectors of some simplicial complexes.  This result gave rise to the following problem.

\begin{prob*}[cf. \cite{NP}, Problem 6.4]
For given flag simple polytope $P$ construct simplicial complex $\Delta(P)$ such that $\gamma(P) = f(\Delta(P))$.
\end{prob*}

In  \cite{Ai} N.Aisbett solved this problem for flag nestohedra. The construction introduced in \cite{Ai} used specific of building sets and was based on the fact that any flag nestohedron is a 2-truncated cube, i.e. can be obtained from the cube by sequence of truncations of codimension 2 faces (see \cite{V1,V2}).  Results about 2-truncated cubes one can find in \cite{BV}.

In the present paper we introduce the construction which for every 2-truncated cube gives required simplicial complex, i.e. solve the problem for class of all 2-truncated cubes. Moreover, we obtain that constructed complex is flag.
\begin{thm*}
For every 2-truncated cube $P^n$ there exists flag simplicial complex $\Delta(P)$ such that $\gamma(P)=f(\Delta(P))$.
\end{thm*}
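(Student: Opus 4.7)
\emph{Proof plan.} I will construct $\Delta(P)$ by a double induction: on the number of truncations used to build $P$, and, in parallel, on the dimension $n$. The base case is $P = I^n$, for which $h(t) = (1+t)^n$ and therefore $\gamma(I^n) = (1, 0, \ldots, 0)$; I set $\Delta(I^n) := \{\emptyset\}$, the complex consisting of the empty face alone, whose $f$-vector is precisely $(1, 0, \ldots)$. For the inductive step, assume $P'$ is obtained from a 2-truncated cube $P^n$ by truncating a codimension-$2$ face $F$, and that a flag complex $\Delta(P)$ with $f(\Delta(P)) = \gamma(P)$ has already been constructed. The class of 2-truncated cubes is closed under passage to codimension-$2$ faces (each new facet produced by a 2-truncation has the form ``old face'' $\times\, I$ while all other facets are combinatorially preserved, and this propagates to lower-dimensional faces as well), so $F$ is itself a 2-truncated cube of dimension $n-2$, and the dimension induction supplies a flag complex $\Delta(F)$ with $f(\Delta(F)) = \gamma(F)$.

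The central technical ingredient is the identity
\[
\gamma(P')(t) \;=\; \gamma(P)(t) \;+\; t \cdot \gamma(F)(t).
\]
In dual simplicial terms the 2-truncation of $F$ corresponds to edge subdivision at the edge $e_F \in \partial P^\ast$ dual to $F$, and the link of $e_F$ is combinatorially the dual simplicial complex of $F$; computing how the $h$-polynomial transforms under this subdivision and re-expressing the result in the basis $\{t^i(1+t)^{n-2i}\}$ yields the formula, which one can spot-check on the sequence $I^2 \to \text{pentagon} \to \text{hexagon}$. Given this identity, I define
\[
\Delta(P') \;:=\; \Delta(P) \;\cup\; \bigl(v_F \ast \Delta(F)\bigr),
\]
where $v_F$ is a new vertex, $v_F \ast \Delta(F)$ denotes the cone with apex $v_F$, and the union is taken along an identification of $\Delta(F)$ as a subcomplex of $\Delta(P)$. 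The newly added faces are exactly $\{v_F\} \cup \sigma$ for $\sigma \in \Delta(F)$, contributing $t \cdot f(\Delta(F)) = t\gamma(F)$ to the $f$-polynomial and giving $f(\Delta(P')) = \gamma(P')$.

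The main obstacles are (i) specifying the canonical embedding of $\Delta(F)$ as a subcomplex of $\Delta(P)$, and (ii) verifying that $\Delta(P')$ is flag. For (i), one expects vertices of $\Delta(F)$ to correspond to those prior truncation-vertices whose truncated face was contained in $F$; this identification must respect the face relation and be consistent under reordering of the truncation sequence. For (ii), the only new edges of $\Delta(P')$ are the pairs $\{v_F, u\}$ with $u$ a vertex of $\Delta(F)$, so a hypothetical minimal non-face containing $v_F$ would descend, after removing $v_F$, to a minimal non-face of $\Delta(F)$ with all pairs bounding edges---contradicting the inductive flagness of $\Delta(F)$; minimal non-faces not containing $v_F$ are ruled out by flagness of $\Delta(P)$. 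Arranging the identification in (i) so that (a) it respects the face relation, (b) it is independent of the truncation ordering, and (c) the flagness argument of (ii) actually applies, is where the main combinatorial work of the proof lies.
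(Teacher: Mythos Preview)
Your plan is the paper's: the same inductive rule $\Delta(P')=\Delta(P)\cup\bigl(v_F\ast\Delta(F)\bigr)$, driven by the identity $\gamma(P')=\gamma(P)+t\,\gamma(F)$, and you have correctly isolated the two obstacles. Where the paper differs from your outline is that it does \emph{not} invoke a separate dimension induction to produce $\Delta(F)$. Instead it defines $\Delta(Q)$ for \emph{every} face $Q$ of $P$ in one sweep, all on the common vertex set $W(P)=\{w(F_1),\dots,w(F_m)\}$: when passing from $P$ to $\tilde P$ by truncating $G$, one sets $\Delta(\tilde Q)=\Delta(Q)\cup\bigl(\Delta(G\cap Q)\star w\bigr)$ if $\tilde Q$ is the truncation of $Q$, and $\Delta(\tilde Q)=\Delta(Q)$ otherwise. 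The embedding $\Delta(F)\subset\Delta(P)$ is then tautological, and your obstacle (i) dissolves. The key lemma is the intersection formula $\Delta(Q)=\bigcap_{F'\supset Q}\Delta(F')$ (over facets $F'$), proved by a short case analysis on the five ways a codimension-$2$ face of $\tilde P$ can arise from $P$; in particular $\Delta(G\cap Q)=\Delta(G)\cap\Delta(Q)$, and more generally $Q_1\subset Q_2\Rightarrow\Delta(Q_1)\subset\Delta(Q_2)$.

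Your flagness sketch in (ii) has a genuine gap that this lemma is exactly what closes. From a hypothetical minimal non-face $\{v_F\}\cup\tau$ you correctly get that each $u\in\tau$ is a vertex of $\Delta(F)$ and each pair in $\tau$ is an edge of $\Delta(P')$, hence of $\Delta(P)$; but to apply flagness of $\Delta(F)$ you need those pairs to be edges of $\Delta(F)$, not merely of $\Delta(P)$. The paper proves, by induction on the truncation sequence, the stronger statement that any edge of $\Delta(P)$ whose two endpoints lie in $\Delta(Q)$ is already an edge of $\Delta(Q)$; the inductive step (where one endpoint is the newest vertex $w(F_m)$) uses the intersection formula to pass from $v_1\in\Delta(G)\cap\Delta(Q)$ to $v_1\in\Delta(G\cap Q)$. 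So the ``main combinatorial work'' you flagged is precisely: carry $\Delta(\cdot)$ on all faces and prove the intersection formula. A free-standing dimension induction for $\Delta(F)$ would still have to be reconciled with this embedding, and offers no shortcut.
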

In the proof we use the construction that for a given sequence of truncations defines a unique simplicial complex with required $f$-vector. This construction is inductive and build such complexes (on same vertex set) for all faces of the 2-truncated cube. Then we obtain a function $\Delta(Q)$ on the set of faces $G$ of $P$. This function is monotonic, i.e. $\Delta(Q_1)\subset \Delta(Q_2)$ provided by $Q_1\subset Q_2$.
As a corollary we prove that $\gamma$-vectors of 2-truncated cubes satisfy Frankl-Furedi-Kalai inequalities. For dimensions 2 and 3 the required complex is a set of $\gamma_1(P)$ points. For dimensions 4 and 5 the required complex is a graph with $\gamma_1(P)$ vertices and $\gamma_2(P)$ edges without triangles. 

When this paper was in preparation there appeared \cite{Ai2} in Archive. Central result of \cite{Ai2} coinside with the central result of the note \cite{V3} which is a short version of the present paper.

\section{Face polynomials}
The convex $n$-dimensional polytope $P$ is called \emph{simple} if its every vertex belongs to exactly $n$ facets.\medskip\\
Let $f_i$ be the number of $i$-dimensional faces of an $n$-dimensional polytope $P$. The vector $(f_0,\ldots,f_n)$ is called the $f$-vector of $P$. The $F$-polynomial of $P$ is defined by:
\begin{equation*}
F(P)(\alpha,t)=\alpha^n+f_{n-1}\alpha^{n-1}t+\dots +f_1\alpha t^{n-1}+f_0 t^n.
\end{equation*}
The $h$-vector and $H$-polynomial of $P$ are defined by:
\begin{equation*}
H(P)(\alpha,t)=h_0\alpha^n+h_1\alpha^{n-1}t+\dots+h_{n-1}\alpha t^{n-1}+h_n t^n=F(P)(\alpha-t,t).
\end{equation*}
The $g$-vector of a simple polytope $P$ is the vector $(g_0,g_1,\dots,g_{[\frac{n}{2}]})$, where $g_0=1,\quad g_i=h_i-h_{i-1}, i>0$.\medskip\\
The Dehn-Sommerville equations (see \cite{Zi}) state that $H(P)$ is symmetric for any simple polytope. Therefore, it can be represented as a polynomial of $a=\alpha+t$ and $b=\alpha t$:
\begin{equation*}
H(P)=\sum\limits_{i=0}^{[\frac{n}{2}]}\gamma_i(\alpha t)^i(\alpha+t)^{n-2i}.
\end{equation*}
The $\gamma$-vector of $P$ is the vector $(\gamma_0,\gamma_1,\dots,\gamma_{[\frac{n}{2}]})$. The $\gamma$-polynomial of $P$ is defined by:
\begin{equation*}
\gamma(P)(\tau)=\gamma_0+\gamma_1\tau+\dots+\gamma_{[\frac{n}{2}]}\tau^{[\frac{n}{2}]}.
\end{equation*}

\section{Class of 2-truncated cubes}
In this section we introduce the class of 2-truncated cubes. Proofs of the propositions and more results about this class one one can find in \cite{BV}.

\begin{defn}
We say that simple polytope $\tilde P$ is obtained from simple polytope $P$ by truncation of the face $G\subset P$, if simplicial complex $\partial \tilde P^*$ is obtained from the simplitial complex $\partial P^*$ by stellar subdivision along the simplex $\sigma_G$ corresponding to the face $G$. Polytope $\tilde P$ has new facet corresponding to the new vertex $v_0\in \partial \tilde P^*$.
\\~\\
Unformally, polytope $\tilde P$ is obtained from
$P$ by shifting the support hyperplane of $G$ inside polytope $P$. The new facet $\tilde F_s$ of polytope $\tilde P$ corresponding to the new vertex $v_0\in \partial \tilde P^*$ is defined by the section. We will call it the \emph{section facet} $\tilde F_s$.
\end{defn}

\begin{defn}
Truncation of a face $G$ of codimension 2 will be called 2-truncation. A combinatorial polytope obtained from a cube by 2-truncations will be called a 2-truncated cube.
\end{defn}

\begin{rem}\label{new-face}
In this case the section facet will have combinatorial type $G\times I$. After 2-truncation facet $F$ of $P$ either stays unchanged (if $G\supset F$ or $G\cap F=\emptyset$) or handles 2-truncation of face $F\cap G$. Then, for each face $\tilde Q$ of $\tilde P$ there exists a unique face $Q$ such that either $\tilde Q$ is obtained from $Q$ by 2-truncation of $G\cap Q$ or $\tilde Q$ is unchanged (or perturbed) face $Q$ of $P$ or $\tilde Q = Q \times I\subset G\times I$.
\end{rem}

\begin{prop}\label{shave}
Let the $\tilde P$ be obtained from the simple polytope $P$ by 2-truncation of the face $G$, then 
\begin{equation}\label{gamma-change}
\gamma(\tilde P)=\gamma(P)+\tau\gamma(G).
\end{equation}
\end{prop}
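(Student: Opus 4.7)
\medskip
\noindent\textbf{Proof plan.} The cleanest strategy is first to establish the $F$-polynomial identity
$$F(\tilde P;\alpha,t) = F(P;\alpha,t) + t(\alpha+t)\,F(G;\alpha,t),$$
and then propagate it through the standard substitutions $H(\cdot;\alpha,t)=F(\cdot;\alpha-t,t)$ and $H=(\alpha+t)^n\gamma(\alpha t/(\alpha+t)^2)$ to obtain the desired $\gamma$-identity.

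For the $F$-identity I use the face bookkeeping provided by Remark~\ref{new-face}. That remark classifies each face of $\tilde P$ as either an unchanged/perturbed face of $P$, a 2-truncation of a face of $P$ meeting $G$ properly (of the same dimension), or a product face $Q\times I$ inside the new section facet $\tilde F_s$, whose combinatorial type is $G\times I$. Read in the opposite direction, the passage $P\to\tilde P$ amounts to removing every face of $P$ contained in $G$ (treating $G$ as its own top face, so that $f_{n-2}(G)=1$), leaving every other face intact, and inserting all faces of $\tilde F_s$. Therefore
$$f_j(\tilde P)=f_j(P)-f_j(G)+f_j(G\times I),\qquad 0\le j\le n.$$
Multiplying by $\alpha^j t^{n-j}$ and summing, the lost faces of $G$ contribute $t^2\,F(G;\alpha,t)$ (codimension $2$ in $P$), while the faces of $\tilde F_s$ contribute $t\,F(G\times I;\alpha,t)$ (codimension $1$ in $\tilde P$). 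Combined with the Cartesian product formula $F(G\times I)=F(G)\,F(I)=(\alpha+2t)\,F(G)$, this yields
$$F(\tilde P)-F(P) = -t^2 F(G) + t(\alpha+2t)F(G) = t(\alpha+t)\,F(G).$$

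The remaining steps are purely formal. Substituting $\alpha\mapsto\alpha-t$ converts $F$ to $H$ and the prefactor $t(\alpha+t)$ to $\alpha t$, giving $H(\tilde P)=H(P)+\alpha t\,H(G)$. Dividing by $(\alpha+t)^n$ and using $H(P)=(\alpha+t)^n\gamma(P)(\tau)$ together with $H(G)=(\alpha+t)^{n-2}\gamma(G)(\tau)$ for $\tau=\alpha t/(\alpha+t)^2$, the factor $\alpha t/(\alpha+t)^2$ next to $\gamma(G)(\tau)$ collapses to exactly $\tau$, so $\gamma(\tilde P)=\gamma(P)+\tau\gamma(G)$.

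The main obstacle is the face bookkeeping step: one has to verify carefully, using Remark~\ref{new-face}, that the three cases exhaust the faces of $\tilde P$ without overlap, so that the lost faces of $P$ are precisely the faces of $G$ (including $G$ itself) and the gained faces are precisely those of $\tilde F_s$. Once that is secured, the $F$-identity is a one-line calculation and the passages to $H$ and $\gamma$ are routine.
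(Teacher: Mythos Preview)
Your argument is correct. The face count $f_j(\tilde P)=f_j(P)-f_j(G)+f_j(G\times I)$ follows from the partition of the faces of $\tilde P$ into (i) those not contained in the section facet $\tilde F_s$, which are in dimension-preserving bijection with the faces of $P$ not contained in $G$ (this is immediate from the stellar-subdivision description: simplices of $\partial\tilde P^*$ not containing the new vertex $v_0$ are exactly the simplices of $\partial P^*$ not containing the edge $\sigma_G$), and (ii) the faces of $\tilde F_s\cong G\times I$. Your subsequent algebra, passing from $F$ to $H$ via $\alpha\mapsto\alpha-t$ and then to $\gamma$ via $H(\,\cdot\,)=(\alpha+t)^{\dim}\gamma(\,\cdot\,)(\tau)$ with $\tau=\alpha t/(\alpha+t)^2$, is routine and correct.

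As for comparison with the paper: the paper does not actually give a proof of Proposition~\ref{shave}. Section~3 opens by stating that proofs of the propositions there are to be found in \cite{BV}, and this proposition is simply quoted. So there is no in-paper argument to compare against; your derivation is a standard and self-contained proof of the identity.
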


\begin{prop}\label{flagshave}
Any 2-truncation keeps flagness.
\end{prop}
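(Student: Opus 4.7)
The plan is to translate the claim into a statement about simplicial complexes. I use the standard fact that a simple polytope $P$ is flag if and only if $\partial P^*$ is a flag simplicial complex, i.e., every clique in its $1$-skeleton bounds a simplex. Since $G$ has codimension $2$, the simplex $\sigma_G$ is an edge, which I write as $\{a,b\}$ where $a,b$ are the vertices of $\partial P^*$ dual to the two facets of $P$ meeting in $G$. By definition, $\partial \tilde P^*$ is the stellar subdivision of $\partial P^*$ at this edge; let $v_0$ denote the new vertex.

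The first step is to record the explicit description of the simplices of $\partial \tilde P^*$. Unpacking the definition of stellar subdivision, one checks that a subset of $V(\partial P^*)$ is a simplex of $\partial \tilde P^*$ iff it is a simplex of $\partial P^*$ not containing both $a$ and $b$, and a subset of the form $\{v_0\}\cup\tau$ with $\tau\subseteq V(\partial P^*)$ is a simplex iff $\{a,b\}\not\subseteq\tau$ and $\tau\cup\{a,b\}$ is a simplex of $\partial P^*$. This description is routine but is the engine for everything that follows.

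With it in hand, I take an arbitrary clique $C$ in the $1$-skeleton of $\partial \tilde P^*$ and show $C$ is a face. If $v_0\notin C$, then $C$ cannot contain both $a$ and $b$ because the edge $\{a,b\}$ was deleted; all other edges between old vertices survive, so $C$ is a clique in $\partial P^*$, and flagness of $\partial P^*$ gives a simplex which is still a simplex of $\partial \tilde P^*$. If $v_0\in C$, write $C=\{v_0\}\cup C'$. Then $C'\not\supseteq\{a,b\}$ for the same reason, and the key observation is that every edge $\{v_0,c\}$ with $c\notin\{a,b\}$ forces $\{a,b,c\}$ to be a simplex of $\partial P^*$; combined with the fact that pairs inside $C'$ are edges of $\partial P^*$, this yields that $C'\cup\{a,b\}$ is a clique in the $1$-skeleton of $\partial P^*$. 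Flagness promotes it to a simplex, and the description above then gives that $\{v_0\}\cup C'$ is a simplex of $\partial \tilde P^*$.

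The only real obstacle is the bookkeeping around $v_0$: the set of edges incident to $v_0$ and the characterization of higher simplices containing $v_0$ have to be gotten right. Once that is set up, both cases reduce to short applications of the flagness hypothesis on $\partial P^*$.
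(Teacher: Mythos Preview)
Your argument is correct. The translation to stellar subdivision of $\partial P^*$ at the edge $\{a,b\}$ is exactly the right move, your description of the faces of the subdivision is accurate, and the two-case clique analysis goes through: in the case $v_0\notin C$ the deletion of the edge $\{a,b\}$ forces $\{a,b\}\not\subseteq C$, and in the case $v_0\in C$ your observation that each edge $\{v_0,c\}$ with $c\notin\{a,b\}$ encodes the simplex $\{a,b,c\}\in\partial P^*$ is precisely what makes $C'\cup\{a,b\}$ a clique. One tiny point worth making explicit: when $C'$ already contains one of $a,b$, the edges from that vertex to the rest of $C'$ come from the clique condition on $C$ rather than from the $v_0$-edges, but this does not affect the conclusion.

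As for comparison with the paper: the paper does not actually prove Proposition~\ref{flagshave}. Section~3 states it together with Propositions~\ref{shave} and~\ref{truncated_ring} and explicitly defers all proofs to \cite{BV}. So there is no in-paper argument to compare against; your proof supplies what the paper omits. The approach you take---working dually with the stellar subdivision of the flag sphere at an edge---is the standard and essentially canonical one for this fact, and is in the same spirit as the treatment in \cite{BV}.
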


\begin{prop}\label{truncated_ring}
Every face of 2-truncated cube is a 2-truncated cube.
\end{prop}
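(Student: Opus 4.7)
The plan is to induct on the number $k$ of 2-truncations used to produce $P$ from a cube. The base case $k=0$ is immediate, since every face of a cube $I^n$ is itself a cube $I^\ell$, hence vacuously a 2-truncated cube. For the inductive step, write $\tilde P$ as the 2-truncation of some $P$ (built by $k-1$ truncations) along a codimension-$2$ face $G \subset P$, so that by the induction hypothesis every face of $P$ is already a 2-truncated cube.

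Now pick an arbitrary face $\tilde Q$ of $\tilde P$ and invoke Remark~\ref{new-face}: there is a unique face $Q$ of $P$ fitting one of three cases, namely (a) $\tilde Q$ is the 2-truncation of $Q$ along $G \cap Q$; (b) $\tilde Q$ is a perturbation of the unchanged face $Q$; or (c) $\tilde Q = Q \times I$ sits inside the new section facet $G \times I$, with $Q$ a face of $G$. In cases (a) and (b) the face $Q$ is a 2-truncated cube by the inductive hypothesis, and $\tilde Q$ is either $Q$ itself or is obtained from $Q$ by one further 2-truncation, so $\tilde Q$ is again a 2-truncated cube by definition.

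The main obstacle is case (c), which reduces to the auxiliary claim that if $R$ is a 2-truncated cube then so is $R \times I$. I would prove this by a secondary induction on the number of 2-truncations realizing $R$: if $R$ arises from $R'$ by 2-truncation along a codimension-$2$ face $H \subset R'$, then $H \times I$ is codimension $2$ in $R' \times I$, and 2-truncation of $R' \times I$ along $H \times I$ yields precisely $R \times I$. Geometrically this is because slicing $R'$ near $H$ with a hyperplane $\Pi \subset \mathbb{R}^{\dim R'}$ has the same combinatorial effect on $R' \times I$ as slicing with $\Pi \times \mathbb{R}$ near $H \times I$; equivalently, in the stellar-subdivision formulation used in the definition, the simplex corresponding to $H \times I$ in $\partial(R' \times I)^{*}$ is the join of the one for $H$ with an extra vertex. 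Applying this to $Q$, which is a face of $G$ and therefore a 2-truncated cube by the outer induction, shows that $\tilde Q = Q \times I$ is a 2-truncated cube, and the induction closes.
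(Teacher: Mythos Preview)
Your argument is correct. Note, however, that the paper does not actually prove this proposition: Section~3 explicitly states that proofs of these propositions are deferred to~\cite{BV}, so there is no in-paper proof to compare against. Your induction on the number of truncations, combined with the trichotomy of Remark~\ref{new-face} and the auxiliary lemma that $R\times I$ is a 2-truncated cube whenever $R$ is, is the natural approach and is essentially what one finds in~\cite{BV}.

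One small correction: your parenthetical stellar-subdivision justification is misstated. The simplex in $\partial(R'\times I)^{*}$ corresponding to $H\times I$ is \emph{not} the join of $\sigma_H$ with an extra vertex; it is the same edge $\sigma_H$ (on the vertices $F_1\times I$, $F_2\times I$ where $H=F_1\cap F_2$), since $H\times I$ does not meet the two end facets $R'\times\{0\}$, $R'\times\{1\}$. The correct combinatorial statement is that $\partial(R'\times I)^{*}$ is the suspension $\Sigma\,\partial(R')^{*}$, and stellar subdivision commutes with suspension: $\mathrm{sd}_{\sigma_H}(\Sigma K)\cong\Sigma(\mathrm{sd}_{\sigma_H}K)$, because $\mathrm{lk}_{\Sigma K}(\sigma_H)=\Sigma\,\mathrm{lk}_K(\sigma_H)$. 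Your geometric argument (extending the cutting hyperplane $\Pi$ to $\Pi\times\mathbb{R}$) is already sufficient on its own, so this slip does not damage the proof.
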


\section{Main results}

Simplicial complex is called \emph{flag}, if its every clique forms a simplex. For simplicial complex $K$ of dimension $d$ the $f$-polynomial is defined by $f(K) := 1 + f_0 t + \dots + f_d t^{d+1}$, where $f_i$ are the numbers of $i$-dimensional faces. The central result of the paper is following.

\begin{thm}
For every 2-truncated cube $P$ there exists a flag  complex $\Delta(P)$ such that $\gamma(P)=f(\Delta(P))$.
\end{thm}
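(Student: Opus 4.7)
The plan is to prove the theorem by induction on the number $k$ of 2-truncations needed to obtain $P$ from a cube, defining $\Delta(Q)$ simultaneously for every face $Q$ of $P$ (each itself a 2-truncated cube by Proposition \ref{truncated_ring}) as a subcomplex of a common ambient complex on a shared vertex set. Three invariants are maintained throughout: (a) $f(\Delta(Q)) = \gamma(Q)$; (b) $\Delta(Q_1)$ is an \emph{induced} subcomplex of $\Delta(Q_2)$ whenever $Q_1 \subseteq Q_2$; (c) $\Delta(Q)$ is flag. The base case $P = I^n$ is trivial: every face is a cube with $\gamma(Q) = 1$, so one sets $\Delta(Q) = \{\emptyset\}$ uniformly.

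For the inductive step, suppose $\tilde P$ is obtained from $P$ by 2-truncating a face $G$. Introduce a fresh vertex $v_G$ disjoint from the current vertex set. For each face $\tilde Q$ of $\tilde P$, Remark \ref{new-face} identifies a corresponding face $Q$ of $P$ in one of three ways, and I would define $\Delta(\tilde Q)$ case by case: if $\tilde Q$ is an unchanged or perturbed image of $Q$, set $\Delta(\tilde Q) := \Delta(Q)$; if $\tilde Q$ is obtained from $Q$ by 2-truncation along $Q \cap G$, set $\Delta(\tilde Q) := \Delta(Q) \cup \bigl(v_G * \Delta(Q \cap G)\bigr)$; if $\tilde Q$ lies inside the section facet $\tilde F_s \cong G \times I$ as $Q \times I$ (or $Q \times \{0\}$, $Q \times \{1\}$) for a face $Q$ of $G$, set $\Delta(\tilde Q) := \Delta(Q)$. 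In particular, $\Delta(\tilde P) = \Delta(P) \cup \bigl(v_G * \Delta(G)\bigr)$.

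Invariant (a) transfers by Proposition \ref{shave}: coning $\Delta(G)$ from $v_G$ adds $t \cdot f(\Delta(G))$ to the $f$-polynomial, matching the $\tau \gamma(G)$ contribution to $\gamma$; the analogous calculation covers Case (b), while the identity $\gamma(Q \times I) = \gamma(Q)$ (which follows from $H(Q \times I) = H(Q)(\alpha + t)$) handles Case (c). For flagness, consider a clique $S$ in $\Delta(\tilde P)$: if $v_G \notin S$, then $S$ is a clique in $\Delta(P)$ and hence a simplex by inductive flagness. If $v_G \in S$, then every $w \in S \setminus \{v_G\}$ is joined to $v_G$ by an edge of $v_G * \Delta(G)$, forcing $w$ to be a vertex of $\Delta(G)$; invariant (b) then upgrades the clique $S \setminus \{v_G\}$ from $\Delta(P)$ to a clique in $\Delta(G)$, and inductive flagness of $\Delta(G)$ makes it a simplex whose cone with $v_G$ is the required simplex.

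The main obstacle is invariant (b). For every inclusion $\tilde Q_1 \subseteq \tilde Q_2$ of faces of $\tilde P$, one must trace through the combinations of Cases (a)/(b)/(c) and verify that $\Delta(\tilde Q_1)$ sits as an \emph{induced} (not merely ordinary) subcomplex of $\Delta(\tilde Q_2)$. The delicate subcases are those in which $\tilde Q_2$ acquires $v_G$ while $\tilde Q_1$ does not, where one must invoke the inductive induced-subcomplex property for the relevant inclusion among $Q_1$, $Q_2$, $Q_1 \cap G$, $Q_2 \cap G$ inside $P$. This is a careful but essentially mechanical bookkeeping argument, driven by the combinatorial structure of the three cases in Remark \ref{new-face}, and once (b) is secured the flag property (c) follows as above.
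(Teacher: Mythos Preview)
Your construction is exactly the one the paper uses, and your plan to carry (a), (b), (c) inductively is the right idea. The problem is that invariant (b) as you have stated it --- ``$\Delta(Q_1)$ is an \emph{induced} subcomplex of $\Delta(Q_2)$ whenever $Q_1\subset Q_2$'' --- is too weak to propagate through the induction. The breaking case is precisely the one you do \emph{not} flag as delicate: when both $\tilde Q_1\subset\tilde Q_2$ are obtained by truncation. Writing $\Delta(\tilde Q_i)=\Delta(Q_i)\cup\bigl(v_G*\Delta(Q_i\cap G)\bigr)$, a simplex $\sigma\in\Delta(\tilde Q_2)$ containing $v_G$ has $\sigma\setminus\{v_G\}\in\Delta(Q_2\cap G)$ with vertices in $V(\Delta(Q_1))$. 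To place $\sigma$ in $\Delta(\tilde Q_1)$ you need $\sigma\setminus\{v_G\}\in\Delta(Q_1\cap G)$, and for that you must know that a vertex lying in both $\Delta(Q_1)$ and $\Delta(G)$ already lies in $\Delta(Q_1\cap G)$. This vertex-level identity $V(\Delta(A))\cap V(\Delta(B))=V(\Delta(A\cap B))$ simply does not follow from ``induced subcomplex'' hypotheses on the various inclusions among $Q_1,Q_2,Q_1\cap G,Q_2\cap G$; induced-ness constrains which simplices lie over a given vertex set, not what that vertex set is.

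The paper closes exactly this gap by proving a stronger structural statement first (Lemma~\ref{delta-intersection}): for every face $Q$ one has $\Delta(Q)=\bigcap_{F\supset Q}\Delta(F)$, which in a simple polytope immediately yields $\Delta(A)\cap\Delta(B)=\Delta(A\cap B)$ whenever $A\cap B\neq\emptyset$. This intersection formula is what drives both monotonicity and the flagness argument; your (b) is a consequence of it but not a substitute for it. The fix is to replace your invariant (b) by this intersection property (it suffices to check it for codimension-$2$ faces, which is a finite case analysis analogous to the paper's), after which your arguments for (a) and (c) go through unchanged.
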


Let $P$ be a 2-truncated cube with fixed sequence of truncations defined by section facets $F_1, \ldots, F_m$. For every face $Q\subset P$ including $Q=P$, let us construct simplicial comlex $\Delta(Q)$ on the vertex set $W(P) = \{w(F_1),\ldots, w(F_m)\}$.

\begin{cons}\label{gamma-complex}
For $P=I^n$ we have $W(P)=\emptyset$ and $\Delta(Q)=\emptyset$ for all the faces.

Assume that required family of simplicial complexes is constructed for polytope $P$ which is obtained from the cube by sequence of 2-truncations corresponding to sequence $F_1,\ldots,F_{m-1}$ of section facets of $P$. Let polytope $\tilde P$ be obtained from $P$ by 2-truncation of face $G_m\subset P$. Then, $W(\tilde P)=W(P)\cup\{w(F_m)\}$, where $w(F_m)$ corresponds to the new facet $F_m$ of $\tilde P$.

 Consider arbitrary face $\tilde Q\subset \tilde P$. Let $Q$ be the face from remark \ref{new-face}. Then,\begin{equation}\label{gamma-complex-defn}
\Delta(\tilde Q):=
\begin{cases}
\Delta(Q) \cup (\Delta(G_m\cap Q)\star w(F_m)),&\text{if $\tilde Q$ is obtained from $Q$ by 2-truncation of  $G_m\cap Q\subset Q$;}\\
\Delta(Q),&\text{otherwise.}\\
\end{cases}
\end{equation}
\end{cons}

\begin{rem}
The number of connected components of $\Delta(P)$ is not greater than number of cubes among truncated faces $G_1,\ldots, G_m$.
\end{rem}

\begin{lem}\label{delta-intersection}
For every $k$-face $Q^k$ of $P$ we have $$\Delta(Q^k) = \bigcap_{F^{n-1}\supset Q^{k}}\Delta(F^{n-1})$$
\end{lem}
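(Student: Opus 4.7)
The plan is to induct on the number $m$ of truncations in the sequence defining $P$. The base case $m=0$ is trivial, since $W(P)=\emptyset$ forces both sides of the equality to be empty. For the inductive step, suppose $\tilde P$ is obtained from $P$ by 2-truncating a codimension-$2$ face $G_m$ with new section facet $F_m$, and assume the lemma for $P$ on the vertex set $W(P)$. For each face $\tilde Q$ of $\tilde P$, I associate the unique face $Q$ of $P$ via Remark \ref{new-face} and split into the three cases listed there: (I) $\tilde Q$ is the 2-truncation of $Q$ along $G_m\cap Q$; (II) $\tilde Q$ is the unchanged or perturbed copy of $Q$; (III) $\tilde Q=Q\times I\subset F_m$. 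In every case, after listing the facets of $\tilde P$ containing $\tilde Q$, I split a simplex $\sigma$ in $\bigcap_{\tilde F\supset\tilde Q}\Delta(\tilde F)$ according to whether $w(F_m)\in\sigma$. Simplices avoiding $w(F_m)$ always collapse to $\bigcap_{F\supset Q}\Delta(F)$ in $P$, which is $\Delta(Q)$ by the inductive hypothesis.

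In case (I) a codimension count on defining facet sets rules out $Q\subset F^{(1)}$ and $Q\subset F^{(2)}$, where $F^{(1)},F^{(2)}$ are the two facets of $P$ with $G_m=F^{(1)}\cap F^{(2)}$, so every facet $F\supset Q$ is itself 2-truncated and contributes $\Delta(F\cap G_m)\star w(F_m)$ to $\Delta(\tilde F)$. The simplices containing $w(F_m)$ in the intersection then reduce to $\left(\bigcap_{F\supset Q}\Delta(F\cap G_m)\right)\star w(F_m)$; since $F\cap G_m$ is a codimension-$3$ face defined by $\{F,F^{(1)},F^{(2)}\}$, the inductive hypothesis yields $\bigcap_{F\supset Q}\Delta(F\cap G_m)=\Delta(F^{(1)})\cap\Delta(F^{(2)})\cap\Delta(Q)=\Delta(G_m\cap Q)$, exactly matching the defining formula for $\Delta(\tilde Q)$. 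Case (III) is similar: $F_m$ is among the facets of $\tilde P$ containing $\tilde Q$, and $\Delta(F_m)=\Delta(G_m)=\Delta(F^{(1)})\cap\Delta(F^{(2)})$ involves no $w(F_m)$, forcing the intersection to live in the $w(F_m)$-free part and simplify to $\Delta(Q)=\Delta(\tilde Q)$.

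The main obstacle is case (II), where I must show that no simplex containing $w(F_m)$ survives the intersection. It suffices to produce a single facet $F\supset Q$ that is not 2-truncated at step $m$, so that $\Delta(\tilde F)=\Delta(F)$ excludes $w(F_m)$. If $G_m\cap Q$ has codimension $1$ in $Q$ then $Q$ lies in exactly one of $F^{(1)},F^{(2)}$, and that $F^{(i)}\supset G_m$ is an unchanged facet containing $Q$. The delicate sub-case is $Q\cap G_m=\emptyset$; here I invoke flagness of $\partial P^*$, guaranteed by Proposition \ref{flagshave}. If no $F^{(i)}$ contained $Q$ and every pair $\{F,F^{(i)}\}$ with $F\supset Q$, $i\in\{1,2\}$, were an edge of $\partial P^*$, then the set $\{F:F\supset Q\}\cup\{F^{(1)},F^{(2)}\}$ would be a clique in this flag complex and hence a simplex of $\partial P^*$, contradicting $Q\cap G_m=\emptyset$. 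Therefore some facet $F\supset Q$ satisfies $F\supset G_m$ or $F\cap G_m=\emptyset$, supplying the required non-2-truncated facet and completing the induction.
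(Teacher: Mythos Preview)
Your argument is essentially correct and takes a genuinely different route from the paper's. The paper first observes that it suffices to verify the identity for faces of codimension~$2$, and then runs a five-way case split according to how the two defining facets $\tilde F_1,\tilde F_2$ of $\tilde Q=\tilde F_1\cap\tilde F_2$ behave under the last truncation (both unchanged; one truncated and one unchanged; both truncated; one equal to the section facet $\tilde F_s$ paired with an unchanged, respectively truncated, facet), with case~5 appealing to an auxiliary induction on dimension. You instead treat faces of arbitrary codimension directly, split on the three types of Remark~\ref{new-face}, and separate simplices by whether they contain $w(F_m)$. Your version avoids both the reduction to codimension~$2$ and the dimension induction, and it makes explicit where flagness of $P$ is used (your clique argument in the sub-case $Q\cap G_m=\emptyset$); in the paper this reliance on flagness is hidden inside case~3, where the assertion that $\tilde Q$ is obtained from $Q$ by truncation tacitly excludes the possibility $G\cap F_1\cap F_2=\emptyset$.

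One small omission to patch: in case~(II) you treat the sub-cases where $G_m\cap Q$ has codimension~$1$ in $Q$ and where $G_m\cap Q=\emptyset$, but the \emph{perturbed} sub-case $Q\subset G_m$ (codimension~$0$) deserves a sentence. There the facets of $\tilde P$ containing $\tilde Q$ are not in bijection with those of $P$ containing $Q$: one of $\tilde F^{(1)},\tilde F^{(2)}$ is replaced by $F_m$. Hence your claim that the $w(F_m)$-free part ``collapses to $\bigcap_{F\supset Q}\Delta(F)$'' needs the identity $\Delta(F_m)=\Delta(G_m)=\Delta(F^{(1)})\cap\Delta(F^{(2)})$, exactly as you invoke it in case~(III); and the required unchanged facet containing $Q$ is simply $F^{(1)}$ (or $F^{(2)}$), since both contain $G_m\supset Q$. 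With that addition the induction closes.
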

\begin{cor}
Function $\Delta(\cdot)$ is monotonic, i.e. $\Delta(Q_1)\subset\Delta(Q_2)$ provided by $Q_1\subset Q_2$.
\end{cor}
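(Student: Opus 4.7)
The plan is to derive the monotonicity directly from Lemma \ref{delta-intersection}, which expresses $\Delta(Q)$ as an intersection of the complexes associated with the facets containing $Q$. The key observation is that face-containment reverses the direction of inclusion for the indexing sets of such an intersection.

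Concretely, suppose $Q_1 \subset Q_2$ are faces of the 2-truncated cube $P$. Then any facet $F^{n-1}$ of $P$ that contains $Q_2$ automatically contains $Q_1$, so we have the inclusion of indexing families
\begin{equation*}
\{F^{n-1} : F^{n-1} \supset Q_2\} \;\subset\; \{F^{n-1} : F^{n-1} \supset Q_1\}.
\end{equation*}
Applying Lemma \ref{delta-intersection} to both $Q_1$ and $Q_2$ and using the elementary fact that intersecting over a larger family of sets produces a smaller result, I obtain
\begin{equation*}
\Delta(Q_1) \;=\; \bigcap_{F^{n-1}\supset Q_1}\Delta(F^{n-1}) \;\subset\; \bigcap_{F^{n-1}\supset Q_2}\Delta(F^{n-1}) \;=\; \Delta(Q_2),
\end{equation*}
which is exactly the asserted monotonicity.

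The only subtlety is ensuring the two degenerate cases are handled. If $Q_2 = P$ itself, then the indexing family on the right is empty, in which case the intersection should be interpreted as $\Delta(P)$, matching the base case of the lemma; the inclusion $\Delta(Q_1) \subset \Delta(P)$ then holds because $\Delta(Q_1)$ is one of the terms in the intersection defining (or equal to) $\Delta(P)$. If $Q_1 = Q_2$, the inclusion is trivial. Since the argument is purely set-theoretic once the lemma is granted, there is no real obstacle — the entire content of the corollary is compressed into the contravariant behavior of the facet-indexing families under face inclusion.
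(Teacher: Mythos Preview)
Your argument is correct and is exactly the derivation the paper has in mind: the corollary is stated immediately after Lemma~\ref{delta-intersection} without a separate proof because it follows from the intersection formula by precisely the contravariance of indexing families that you spell out. The only quibble is your treatment of the case $Q_2 = P$: since $\Delta(P)$ is not itself defined as an intersection, the phrase ``one of the terms in the intersection defining $\Delta(P)$'' does not parse; the clean fix is simply to observe that $\Delta(F)\subset\Delta(P)$ for every facet $F$, which is immediate from the inductive construction~\eqref{gamma-complex-defn}.
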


\begin{proof}[Proof of lemma \ref{delta-intersection}]
The lemma holds for $P=I^n$. Assume it holds for $P$ and prove it for $\tilde P$ obtained from $P$ by 2-truncation of face $G$.
Notice, that it is enough to prove lemma for faces of codimension 2. Let $\tilde Q=\tilde F_1\cap \tilde F_2\subset \tilde P$ be such a face. According to remark \ref{new-face}, we have 5 possible cases:
\begin{enumerate}
\item Both $\tilde F_1$ and $\tilde F_2$ are facets $F_1$ and $F_2$ of $P$ not changed by truncation;
\item Facet $\tilde F_1$ is obtained from $F_1\subset P$ by 2-truncation, facet $\tilde F_2$ is  unchanged facet  $F_2$ of $P$;
\item Both faces $\tilde F_1$ and $\tilde F_2$ are obtained from faces $F_1$ and $F_2$ of $P$ by 2-truncations;
\item Facet $\tilde F_1$ is the section facet $\tilde F_s$ of $\tilde P$, facet $\tilde F_2$ is unchanged facet  $F_2$ of $P$;
\item Facet $\tilde F_1$ is the section facet $\tilde F_s$ of $\tilde P$, facet $\tilde F_2$  is obtained from $F_2\subset P$ by 2-truncation.
\end{enumerate}
The case 1 is obvious. In the case 2 face $\tilde Q$ is unchanged face $Q$ of $P$. Then, 
\begin{eqnarray*}
\Delta(\tilde F_1)\cap \Delta(\tilde F_2) = (\Delta(F_1) \cup (\Delta(G\cap F_1)\star w(\tilde F_s)))\cap\Delta(F_2)=\\=\Delta(F_1)\cap\Delta(F_2)=\Delta(F_1\cap F_2)=\Delta(\tilde F_1\cap\tilde  F_2) = \Delta(\tilde Q).
\end{eqnarray*}
In the case 3 face $\tilde Q$ is obtained from face $Q=F_1\cap F_2$ by truncation of its face $G \cap Q$. Then,
\begin{eqnarray*}
\Delta(\tilde F_1)\cap \Delta(\tilde F_2)= (\Delta(F_1) \cup (\Delta(G\cap F_1)\star w(\tilde F_s)))\cap (\Delta(F_2) \cup (\Delta(G\cap F_2)\star w(\tilde F_s)))=\\=\Delta(F_1\cap F_2)\cup(\Delta(G\cap F_1 \cap F_2)\star w(\tilde F_s))=\Delta(\tilde F_1\cap \tilde F_2)=\Delta(\tilde Q).
\end{eqnarray*}
In the case 4 we have $\Delta(\tilde F_s)=\Delta(G)\subset \Delta(F_2)$ since $G \subset F_2$. Then,
\begin{eqnarray*}
\Delta(\tilde F_1)\cap \Delta(\tilde F_2) = \Delta(G)\cap \Delta(F_2) = \Delta(G) = \Delta(\tilde F_1\cap \tilde F_2)=\Delta(\tilde Q).
\end{eqnarray*}
In the case 5 we have $\Delta(\tilde F_s\cap \tilde F_2)=\Delta(\tilde F_s\cap \tilde F_2\cap \tilde F_3)$, where $\tilde F_3$ is a facet from the previous case. Then, the required relation follows from the previous cases and from the relation for polytope $\tilde F_3$ which holds by inductive assumption (by dimension).
\end{proof}

\begin{lem}
For every face $Q$ of $P$ complex $\Delta(Q)$ is flag.
\end{lem}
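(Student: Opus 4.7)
The plan is to induct on the number $m$ of 2-truncations used to construct $P$ from a cube. The base case $m=0$ is trivial since $\Delta(Q)=\emptyset$ for every face of $I^n$. For the inductive step, assume $\Delta(Q)$ is flag for all faces $Q$ of $P$ and let $\tilde P$ be obtained from $P$ by 2-truncation of $G$ with section facet $F_m$. By Construction~\ref{gamma-complex}, for any $\tilde Q\subset \tilde P$, either $\Delta(\tilde Q)=\Delta(Q')$ for some face $Q'\subset P$ (flag by induction), or $\Delta(\tilde Q)=\Delta(Q)\cup(\Delta(G\cap Q)\star w(F_m))$; only the latter needs work.

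The key observation is that $w(F_m)$ is a fresh vertex not appearing in $\Delta(Q)$, so in $\Delta(\tilde Q)$ its neighbors are exactly the vertices of $\Delta(G\cap Q)$, while every edge not incident to $w(F_m)$ already lies in $\Delta(Q)\cup \Delta(G\cap Q)=\Delta(Q)$ (using monotonicity from the corollary to Lemma~\ref{delta-intersection}). Now take a clique $\sigma$ in $\Delta(\tilde Q)$. If $w(F_m)\notin\sigma$, then $\sigma$ is a clique in $\Delta(Q)$, hence a simplex by induction. If $w(F_m)\in\sigma$, write $\sigma=\{w(F_m)\}\cup\sigma'$; then every $v\in\sigma'$ must be a vertex of $\Delta(G\cap Q)$, and $\sigma'$ is a clique in $\Delta(Q)$.

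The main obstacle is the final step: deducing $\sigma'\in\Delta(G\cap Q)$, which amounts to the \emph{induced}-subcomplex property that $\Delta(G\cap Q)$ coincides with the restriction of $\Delta(Q)$ to the vertex set $V(\Delta(G\cap Q))$. I would strengthen the inductive hypothesis to carry this property along: for every inclusion $Q_1\subset Q_2$ of faces of $P$, $\Delta(Q_1)$ is an induced subcomplex of $\Delta(Q_2)$. Using Lemma~\ref{delta-intersection} this reduces to the case where $Q_2$ is a facet, and can be verified by the same five-case analysis as in the proof of Lemma~\ref{delta-intersection}, exploiting the fact that joining with the fresh vertex $w(F_m)$ preserves induced-subcomplex relations. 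Once the induced property is available, $\sigma'$ is a clique in $\Delta(G\cap Q)$, and the inductive flagness of $\Delta(G\cap Q)$ (valid since $G\cap Q$ is a face of $P$, in particular a 2-truncated cube by Proposition~\ref{truncated_ring}) gives $\sigma'\in\Delta(G\cap Q)$, whence $\sigma\in\Delta(G\cap Q)\star w(F_m)\subset\Delta(\tilde Q)$, completing the induction.
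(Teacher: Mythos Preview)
Your proposal is correct and follows essentially the paper's strategy: induct on $m$, reduce to the union $\Delta(Q)\cup(\Delta(G\cap Q)\star w(F_m))$, and identify the crux as showing that a clique $\sigma'\subset\Delta(Q)$ with vertex set contained in $\Delta(G\cap Q)$ already lies in $\Delta(G\cap Q)$. The only difference is in how this induced-subcomplex property is secured. You propose to carry it as a strengthened hypothesis through a separate five-case analysis mirroring the proof of Lemma~\ref{delta-intersection}; the paper instead observes that Lemma~\ref{delta-intersection} itself already yields $\Delta(G)\cap\Delta(Q)=\Delta(G\cap Q)$ as simplicial complexes (since in a simple polytope the facets containing $G\cap Q$ are exactly those containing $G$ together with those containing $Q$). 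Thus the paper only needs to maintain the edge-level statement ``an edge of $\Delta(P)$ with both endpoints in $\Delta(Q)$ lies in $\Delta(Q)$,'' whose inductive step reduces to the single case $v_2=w(F_m)$ and is dispatched immediately by that intersection identity. Your route works, but it redoes work that Lemma~\ref{delta-intersection} has already packaged.
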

\begin{proof}
On each step of construction \ref{gamma-complex} we merge two flag complexes $\Delta(P_{m-1})$ and $\Delta(G_m)\star w(F_m)$ with flag intersection $\Delta(G_m)$. Then, it is enough to prove that if $\Delta(P)$ contains some edge $\{v_1, v_2\}$ and for some its face $Q$ complex $\Delta(Q)$ contains vertices $v_1$ and $v_2$, then $\Delta(Q)$ contains also edge $\{v_1,v_2\}$.

Without loss of generality we assume that $v_1\in \Delta(G_m)$ and $v_2 = w(F_m)$ . Let $\tilde P$ be obtained from $P$ by 2-truncation of the face $G_m$ and face $\tilde Q$ be obtained from some face $Q$ by 2-truncation of the face $G\cap Q$. We have $v_1\in \Delta(G)$ and $v_1\in \Delta(Q)$, then from lemma \ref{delta-intersection} follows that $v_1\in \Delta(G\cap Q)$. Therefore, the edge $\{v_1, w(F_m)\}$ is contained in $\Delta((G\cap Q)\star w(F_m))\subset \Delta(\tilde Q)$.
\end{proof}

\begin{lem}
For every face $Q$ of $P$ we have $\gamma(Q) = f(\Delta(Q))$.
\end{lem}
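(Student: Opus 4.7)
The plan is to proceed by induction on the length $m$ of the truncation sequence producing $P$. For $m=0$, $P=I^n$ and every face $Q$ is itself a cube, so $\gamma(Q)(\tau)=1$; on the other hand $\Delta(Q)=\emptyset$, and by the convention $f(\Delta(Q))(t)=1$. The base case is therefore automatic.

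For the inductive step, suppose the lemma holds for $P$ (which is obtained by $m-1$ truncations) and let $\tilde P$ be obtained from $P$ by 2-truncation of the face $G_m\subset P$. Fix an arbitrary face $\tilde Q\subset\tilde P$ and let $Q\subset P$ be the unique face associated to $\tilde Q$ by Remark \ref{new-face}. We argue case by case, matching Proposition \ref{shave} on the $\gamma$-side with Construction \ref{gamma-complex} on the $\Delta$-side.

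The two easy cases are when $\tilde Q$ is the unchanged (or merely perturbed) face $Q$, and when $\tilde Q=Q\times I\subset G_m\times I$ lies inside the section facet. In the first case both sides are unchanged by definition. In the second case Construction \ref{gamma-complex} gives $\Delta(\tilde Q)=\Delta(Q)$, while from the product formula $H(Q\times I)=H(Q)\cdot(\alpha+t)$ one reads off $\gamma(Q\times I)=\gamma(Q)$; applying the inductive hypothesis to $Q$ finishes these cases.

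The essential case is when $\tilde Q$ is obtained from $Q$ by 2-truncation of $G_m\cap Q$. Here Proposition \ref{shave} gives
\begin{equation*}
\gamma(\tilde Q)=\gamma(Q)+\tau\,\gamma(G_m\cap Q),
\end{equation*}
while Construction \ref{gamma-complex} writes $\Delta(\tilde Q)$ as the union $\Delta(Q)\cup\bigl(\Delta(G_m\cap Q)\star w(F_m)\bigr)$. The main computation is to show that the $f$-polynomial satisfies the analogous relation
\begin{equation*}
f(\Delta(\tilde Q))(t)=f(\Delta(Q))(t)+t\cdot f(\Delta(G_m\cap Q))(t),
\end{equation*}
after which the inductive hypothesis applied to $Q$ and to $G_m\cap Q$ concludes the proof. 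The identity follows from inclusion-exclusion together with two observations: first, the cone formula $f(K\star w)(t)=(1+t)\,f(K)(t)$, which accounts for the new faces obtained by adjoining $w(F_m)$ to every face of $\Delta(G_m\cap Q)$ (including the empty one); second, the identification of the intersection
\begin{equation*}
\Delta(Q)\cap\bigl(\Delta(G_m\cap Q)\star w(F_m)\bigr)=\Delta(G_m\cap Q),
\end{equation*}
which holds because $w(F_m)\notin W(P)$, so the intersection reduces to $\Delta(Q)\cap\Delta(G_m\cap Q)$, and this equals $\Delta(G_m\cap Q)$ by the monotonicity corollary of Lemma \ref{delta-intersection} (since $G_m\cap Q\subset Q$). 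The main obstacle I anticipate is exactly this intersection step: one has to argue cleanly that no ``stray'' face of $\Delta(G_m\cap Q)\star w(F_m)$ involving $w(F_m)$ already lies in $\Delta(Q)$, and that conversely no face of $\Delta(Q)$ outside $\Delta(G_m\cap Q)$ sneaks into the cone; both are ruled out by Lemma \ref{delta-intersection} combined with the fact that $w(F_m)$ is a newly introduced vertex. Once this is in place the two polynomial identities match termwise and the induction closes.
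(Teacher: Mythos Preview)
Your proof is correct and follows the same inductive strategy as the paper. You are in fact more thorough: the paper's proof only states the formula $f(\Delta(\tilde Q))=f(\Delta(Q))+t\,f(\Delta(G\cap Q))$ in the 2-truncation case without justification, whereas you handle all three cases of Remark~\ref{new-face} explicitly and supply the inclusion--exclusion argument (using the cone formula and the monotonicity corollary of Lemma~\ref{delta-intersection}) that the paper leaves implicit.
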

\begin{proof}
The lemma holds for $P=I^n$. From the formula \ref{gamma-complex-defn} follows, that if the face $\tilde Q$ is obtained from $Q$ by 2-truncation, then $f(\Delta(\tilde Q))$ and $f(\Delta(Q))$ are connected by the next formula.
$$f(\Delta(\tilde Q)) = f(\Delta(Q) + t f(\Delta (G \cap Q)).$$
Similar formula \eqref{gamma-change} connects $\gamma$-vectors of $\tilde Q$ and $Q$. The lemma follows.
\end{proof}

\begin{thm*}[Frankl-Furedi-Kalai, \cite{FFR}]
Denote by ${\binom n k}_r$ the number of $k$-clique in Turan graph $T_{n,r}$. For natural numbers $m,k$ and $r\geq k$ there exists unique canonical representation $$m = {\binom{n_k}k}_r + \dots + {\binom{n_{k-s}}{k-s}}_{r-s},$$ where $n_{k-i} - [\frac{n_{k-i}}{r-i}]>n_{k-i-1}$ for all $0\leq i < s$ and $n_{k-s}\geq k-s>0$. Denote $$m^{\langle k \rangle_{r}} = {\binom{n_k}{k+1}}_r + \dots + {\binom{n_{k-s}}{k-s+1}}_{r-s}.$$

The integer vector $(f_0,\dots,f_n)$ with nonnegative components is $f$-vector of some $r$-colorable simplicial complex $K$ if and only if $f_k\leq f_{k-1}^{\langle k \rangle_{r}}$.
\end{thm*}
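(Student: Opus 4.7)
The plan is to prove both directions through the $r$-colored analogue of the Kruskal--Katona compression method, with the Turán complex $T_{n,r}$ playing the role of the extremal object. Recall that $T_{n,r}$ has $n$ vertices partitioned into $r$ nearly-equal color classes, with a face for every transversal; it is a flag $r$-colorable complex whose $k$-face count is precisely ${\binom{n}{k}}_r$. Its skeleta and nested subcomplexes will be the building blocks for both the upper bound and the explicit construction.

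For the ``only if'' direction, I would introduce a color-respecting colex order $\prec_r$ on $r$-colorable $(k-1)$-simplices and prove a shifting lemma: replacing the $(k-1)$-faces of an $r$-colorable complex $K$ by the initial $f_{k-1}(K)$-segment in $\prec_r$ yields an $r$-colorable complex with no smaller $f_k$. The shifts must preserve the color partition, so classical compression has to be refined to act within each color class; a Lindsey-type inequality on links of color-class vertices provides the needed monotonicity. Once compression is in hand, a direct count of the $k$-faces of the initial segment, peeled off as successive maximal Turán blocks, produces exactly $m^{\langle k \rangle_r}$, where the canonical representation of $m$ records the cascade of blocks and the constraints $n_{k-i}-[n_{k-i}/(r-i)]>n_{k-i-1}$ merely encode the greedy extraction procedure.

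For the ``if'' direction, given $(f_0,\ldots,f_n)$ satisfying the cascade inequalities, I would construct $K$ explicitly from the canonical representations. Each summand ${\binom{n_{k-i}}{k-i}}_{r-i}$ in the representation of $f_{k-1}$ corresponds to a copy of the $(k-i-1)$-skeleton of a Turán complex $T_{n_{k-i},r-i}$, and these pieces are glued along common sub-Turán subcomplexes so that the overall complex is $r$-colorable and matches the prescribed $f$-vector in every dimension. Compatibility across ranks $k$ becomes automatic once one fixes a master nested family $T_{N,r}\supset T_{N-1,r}\supset\cdots$ and reads each representation off as a specific sub-skeleton.

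The main obstacle is the shifting step in the necessity proof. Unlike classical Kruskal--Katona, one cannot freely exchange arbitrary vertex pairs because color classes must be preserved, so monotonicity of $f_k$ under color-restricted compression has to be argued separately. I would proceed by induction on $r$ with base case $r=k$ reducing to the standard theorem, combined with a link argument: for each vertex $v$ in color class $C$, the link $\lk_K(v)$ is an $(r-1)$-colorable complex on $V\setminus C$, so applying the inductive hypothesis to each link and summing over $v$ recovers the bound for $K$.
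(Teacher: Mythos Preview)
The paper does not prove this theorem; it is quoted verbatim from \cite{FFR} and used as a black box to deduce the corollary on $\gamma$-vectors of 2-truncated cubes. There is therefore no ``paper's own proof'' to compare your proposal against.

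That said, your sketch does head in the direction of the original Frankl--F\"uredi--Kalai argument, which also relies on a colored compression (shifting within color classes) to reduce to Tur\'an-type initial segments. A few points in your outline are imprecise. First, the shifting statement is garbled: you write that replacing the $(k-1)$-faces by an initial segment ``yields an $r$-colorable complex with no smaller $f_k$,'' but after replacing the $(k-1)$-skeleton you have not specified what the $k$-faces are; the correct formulation is that color-respecting compression does not increase the shadow of the family of $k$-faces, which is equivalent to the stated inequality. Second, your proposed induction with base case $r=k$ ``reducing to the standard theorem'' is off: when $r=k$ an $r$-colorable $(k-1)$-complex has each $(k-1)$-face a full transversal, which is a rather degenerate case and not the classical Kruskal--Katona setting. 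Third, summing link inequalities over vertices in one color class does not directly recover the global bound without an additional convexity or averaging argument, so the inductive step as written has a gap.

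None of this affects the present paper, since the theorem is simply cited.
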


Then, using Frankl-Furedi-Kalai inequalities we obtain the following result which was proved for flag nestohedra in \cite{Ai}.
\begin{cor}
Let $P^n$ be a 2-truncated cube. Then $0\leq \gamma_i\leq \gamma_k^{\langle k\rangle_{r}}$, where $k>1, r=[\frac{n}{2}]$.
\end{cor}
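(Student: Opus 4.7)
The plan is to read the claimed inequality off the Frankl--Furedi--Kalai theorem applied directly to the flag complex $\Delta(P)$ supplied by the main theorem. Because $\gamma(P)=f(\Delta(P))$ identifies $\gamma_{k+1}(P)$ with $f_k(\Delta(P))$, the FFR bound $f_k\le f_{k-1}^{\langle k\rangle_r}$ becomes exactly $\gamma_{k+1}(P)\le \gamma_k(P)^{\langle k\rangle_r}$. Nonnegativity $\gamma_i(P)\ge 0$ is handled separately by induction on the number of 2-truncations, using $\gamma(I^n)=1$ and the update $\gamma(\tilde P)=\gamma(P)+\tau\gamma(G)$ from Proposition \ref{shave}.

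To invoke FFR with $r=[\frac{n}{2}]$ I must verify its hypothesis, namely that $\Delta(P)$ is $r$-colorable. A first observation is the dimension bound: $\gamma_i(P)=0$ for $i>[\frac{n}{2}]$ forces $\dim\Delta(P)\le[\frac{n}{2}]-1$, and flagness then rules out any $(r+1)$-clique in the $1$-skeleton. This is necessary but not sufficient, since the FFR theorem as stated calls for a genuine vertex coloring with $r$ colors.

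The heart of the argument, and the main obstacle, is producing such a coloring. I would proceed by induction along the truncation sequence defining $P$: at the $m$-th step Construction \ref{gamma-complex} introduces exactly one new vertex $w(F_m)$ whose link in the enlarged complex is $\Delta(G_m)$, where $G_m$ is itself a 2-truncated cube of dimension $n-2$ by Proposition \ref{truncated_ring}. The inductive hypothesis equips $\Delta(G_m)$ with a proper coloring in $r-1=[\frac{n-2}{2}]$ colors, so $w(F_m)$ may be given the remaining color. The compatibility of these local colorings across different truncation steps is precisely what Lemma \ref{delta-intersection} controls: since $\Delta(G_m\cap Q)=\Delta(G_m)\cap\Delta(Q)$ for every face $Q$, the colorings assigned at successive steps agree on every intersection and therefore glue into a single global $r$-coloring of $\Delta(P)$. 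With this coloring in hand, FFR applies and the corollary follows directly.
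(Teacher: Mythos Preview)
Your overall plan matches the paper's: read the FFK bound off the identity $\gamma(P)=f(\Delta(P))$. The paper's argument is a single sentence and does not attempt to color $\Delta(P)$; the passage from ``$\Delta(P)$ is flag of dimension at most $[\tfrac n2]-1$'' to ``$f(\Delta(P))$ satisfies the FFK inequalities with $r=[\tfrac n2]$'' is supplied by Frohmader's theorem \cite{Fr}, which asserts that the $f$-vector of any flag complex is also the $f$-vector of a balanced (hence $r$-colorable) complex of the same dimension. Nonnegativity is then automatic, since $f$-vectors are nonnegative.

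Your attempt to produce an explicit $r$-coloring has a genuine gap at the gluing step. Knowing that $\Delta(G_m)$ \emph{admits} some proper $(r{-}1)$-coloring (by induction on dimension) says nothing about which colors the already-fixed $r$-coloring of $\Delta(P_{m-1})$ happens to assign to the vertices of $\Delta(G_m)$; that restriction may well use all $r$ colors, leaving none free for $w(F_m)$. Lemma~\ref{delta-intersection} is a statement about the complexes $\Delta(Q)$, not about any coloring, so it cannot force the local $(r{-}1)$-coloring of $\Delta(G_m)$ to coincide with the restriction of the global coloring on $\Delta(P_{m-1})$. To run such an induction you would need a much stronger hypothesis --- a single coloring of $W(P)$ whose restriction to $\Delta(Q)$ uses at most $[\dim Q/2]$ colors for \emph{every} face $Q$ --- and even then the extension step is not clear. (Already for $n=4$ your claim amounts to $\Delta(P)$ being bipartite, whereas flagness together with $\dim\Delta(P)\le 1$ only gives triangle-free.) The clean route is to drop the coloring construction and invoke Frohmader.
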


Let us apply the obtained result to polytopes of dimensions 4 and 5. Their $\gamma$-vectors have only 3-components: $(1,\gamma_1,\gamma_2)$. In this case we obtain a graph with $\gamma_1$ vertices and $\gamma_2$ edges without triangles. Therefore, we have 3 inequalities:
\begin{enumerate}
\item $\gamma_1\geq 0$;
\item $\gamma_2\geq 0$;
\item $\gamma_2\leq \frac{\gamma_1(\gamma_1-1)}{2}$.
\end{enumerate}

\footnotesize

\small\bigskip
\textsc{Steklov Mathematical Institute,Moscow,Russia}\\
\textsc{Delone Laboratory of Discrete and
Computational Geometry,Yaroslavl State University,Yaroslavl,Russia}\\
\emph{E-mail adress:} \verb"volodinvadim@gmail.com"
\end{document}